\theoremstyle{plain}
\newtheorem{theorem}                {Theorem}      [section]
\theoremstyle{definition}
\newtheorem{remark}       [theorem]  {Remark}
\numberwithin{equation}{section}
\def \R{{\mathbb R}}
\def \s{{\mathbb S}}
\DeclareMathOperator{\trace}{trace}
\numberwithin{equation}{section}
\begin{document}

\title[]{Biharmonic submanifolds into ellipsoids}

\author{S.~Montaldo}
\address{Universit\`a degli Studi di Cagliari\\
Dipartimento di Matematica e Informatica\\
Via Ospedale 72\\
09124 Cagliari, Italia}
\email{montaldo@unica.it}

\author{A.~Ratto}
\address{Universit\`a degli Studi di Cagliari\\
Dipartimento di Matematica e Informatica\\
Viale Merello 93\\
09123 Cagliari, Italia}
\email{rattoa@unica.it}

\begin{abstract}
In this paper we construct proper biharmonic submanifolds into various types of ellipsoids. We also prove, in this context, some useful composition properties which can be used to produce large families of new proper biharmonic immersions.
\end{abstract}

\subjclass[2000]{58E20}

\keywords{Biharmonic maps, biharmonic submanifols, ellipsoids, composition properties}

\thanks{Work supported by P.R.I.N. 2010/11 -- Variet\`a reali e complesse: geometria, topologia e analisi armonica -- Italy.}

\maketitle

\section{Introduction}\label{intro}

{\it Harmonic maps}  are critical points of the {\em energy} functional
\begin{equation}\label{energia}
E(\varphi)=\frac{1}{2}\int_{M}\,|d\varphi|^2\,dv_g \,\, ,
\end{equation}
where $\varphi:(M,g)\to(N,h)$ is a smooth map between two Riemannian
manifolds $M$ and $N$. In analytical terms, the condition of harmonicity is equivalent to the fact that the map $\varphi$ is a solution of the Euler-Lagrange equation associated to the energy functional \eqref{energia}, i.e.
\begin{equation}\label{harmonicityequation}
    {\trace} \, \nabla d \varphi =0 \,\, .
\end{equation}
The left member of \eqref{harmonicityequation} is a vector field along the map $\varphi$, or, equivalently, a section of the pull-back bundle $\varphi^{-1} \, (TN)$: it is called {\em tension field} and denoted $\tau (\varphi)$.

A related topic of growing interest deals with the study of the so-called {\it biharmonic maps}: these maps, which provide a natural generalisation of harmonic maps, are the critical points of the bienergy functional (as suggested by Eells--Lemaire \cite{EL83})
\begin{equation*}\label{bienergia}
    E_2(\varphi)=\frac{1}{2}\int_{M}\,|\tau (\varphi)|^2\,dv_g \,\, .
\end{equation*}
In \cite{Jiang} G.~Jiang derived the first variation and the second variation formulas for the bienergy. In particular, he showed that the Euler-Lagrange equation associated to $E_2(\varphi)$ is
\begin{equation}\label{bitensionfield}
    \tau_2(\varphi) = - J\left (\tau(\varphi) \right ) = - \triangle \tau(\varphi)- \trace R^N(d \varphi, \tau(\varphi)) d \varphi = 0 \,\, ,
    \end{equation}
where $J$ denotes (formally) the Jacobi operator of $\varphi$, $\triangle$ is the rough Laplacian on sections of $\varphi^{-1} \, (TN)$ that, for a local orthonormal frame $\{e_i\}_{i=1}^m$ on $M$, is defined by
\begin{equation}\label{roughlaplacian}
    \Delta=-\sum_{i=1}^m\{\nabla^{\varphi}_{e_i}
    \nabla^{\varphi}_{e_i}-\nabla^{\varphi}_
    {\nabla^{M}_{e_i}e_i}\}\,\,,
\end{equation}
and
\begin{equation*}\label{curvatura}
    R^N (X,Y)= \nabla_X \nabla_Y - \nabla_Y \nabla_X -\nabla_{[X,Y]}
\end{equation*}
is the curvature operator on $(N,h)$.
We point out that \eqref{bitensionfield} is a {\it fourth order}  semi-linear elliptic system of differential equations. We also note that any harmonic map is an absolute minimum of the bienergy, and so it is trivially biharmonic. Therefore, a general working plan is to study the existence of biharmonic maps which are not harmonic:  these shall be referred to as {\it proper biharmonic maps}. We refer to \cite{SMCO} for existence results and general properties of biharmonic maps.\\

An immersed sub\-mani\-fold into a Riemannian manifold $(N,h)$ is called a {\it biharmonic sub\-mani\-fold} if the immersion is a biharmonic map.
In a purely geometric context, B.-Y.~Chen \cite{Chen} defined biharmonic submanifolds $M \,\subset\, \R^n$ of the Euclidean space as those with harmonic mean curvature vector field, that is $\Delta \, H=(\Delta H_1,\ldots,\Delta H_n)=0$, where $H=(H_1,\ldots,H_n)$ is the mean curvature vector as seen in $\R^n$ and $\Delta$ is the Beltrami-Laplace operator on $M$. It is important to point out that, if we apply the definition of biharmonic maps to immersions into the Euclidean space, we recover Chen's notion of biharmonic submanifolds. In this sense, our work can be regarded in the spirit of a generalization of Chen's biharmonic submanifolds.\\

A general result of Jiang \cite{Jiang} tells us that a compact, orientable, biharmonic submanifold $M$ into a manifold $N$ such that ${\rm Riem}^N \, \leq \,0$ is necessarily minimal. Moreover, C.~Oniciuc, in \cite{O02}, proved that also CMC biharmonic isometric immersions into a manifold $N$  with  ${\rm Riem}^N \, \leq \,0$ are necessarily minimal. In fact, it is still open the Chen's conjecture: biharmonic submanifolds into a non-positive constant sectional curvature manifold are minimal. The Chen's conjecture was generalized in \cite{CMO01} for biharmonic submanifolds into a Riemannian manifold with non-positive sectional curvature, although Y.~Ou and L.~Tang found in \cite{OT} a counterexample. These facts have pushed research towards the investigation of biharmonic submanifolds of the Euclidean sphere (see \cite{BMO13,BMO10,BMO08,BO11,BO09,CMO01,CMO02} for an overview of the main results in this context). A further step is the study of biharmonic submanifolds into Euclidean ellipsoids, because these manifolds are geometrically rich and interestingly do not have constant sectional curvature: in \cite{Mont_Ratto} we obtained a complete classification of proper biharmonic curves into 3-dimensional ellipsoids and, more generally, into any non-degenerate quadric. In this paper, we shall focus on proper biharmonic submanifolds of dimension $\geq 2$. \\

\section{Biharmonic submanifolds into ellipsoids}


 We begin with the study of biharmonic submanifolds into Euclidean ellipsoids $Q^{p+q+1}(c,d)$ defined as follows:
\begin{equation*}\label{def-ellissoide}
    Q^{p+q+1}(c,d)=\left \{ (x,y)\in \, \R^{p+1} \times \R^{q+1}= \R^n \,\, : \,\, \frac{|x|^2}{c^2}+ \frac{|y|^2}{d^2} =1 \right \} \,\,,
\end{equation*}
where $c,\,d$ are fixed positive constants. The symmetry of $Q^{p+q+1}(c,d)$ makes it natural to look for biharmonic generalized Clifford's tori. More precisely, we shall study isometric immersions of the following type:
\begin{equation}\label{CliffordToriimmersion}
\left .
  \begin{array}{cccccc}
    i\,\,:\,\,& S^p(a) &\times& S^q(b)&\longrightarrow &Q^{p+q+1}(c,d)\\
    &&&&& \\
    &(x_1,\ldots,x_{p+1}&,&y_1,\ldots,y_{q+1})& \longmapsto & (x_1,\ldots,x_{p+1},y_1,\ldots,y_{q+1}) \,\, , \\
  \end{array}
\right .
\end{equation}
where $i$ denotes the inclusion and the radii $a,b$ must satisfy the following condition:
\begin{equation}\label{condizioneimmersionetoroinellissoide}
    \frac{a^2}{c^2}+  \frac{b^2}{d^2}= 1 \,\, .
\end{equation}
In this context, we shall prove the following result:
\begin{theorem}\label{teorema1} Let $i\,:\,S^p(a) \times S^q(b) \to Q^{p+q+1}(c,d)$ be an isometric immersion as in \eqref{CliffordToriimmersion}. If
\begin{equation}\label{condizioneminimalita}
     a^2 = c^2 \, \frac{p}{p+q} \,\, ; \quad b^2 = d^2 \,
     \frac{q}{p+q}  \,\,
\end{equation}
then the immersion is minimal. If \eqref{condizioneminimalita} does not hold and
\begin{equation}\label{condizioneproperbiharmonic}
    a^2 = c^2 \, \frac{c}{c+d} \,\, ; \quad b^2 = d^2 \, \frac{d}{c+d} \,\, ,
\end{equation}
then the immersion is proper biharmonic.
\end{theorem}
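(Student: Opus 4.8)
The plan is to exploit the factorisation of $i$ through the ambient Euclidean space, namely $S^p(a)\times S^q(b)\stackrel{i}{\to}Q^{p+q+1}(c,d)\stackrel{j}{\to}\R^n$, where $j$ is the canonical inclusion of the ellipsoid, and to reduce every computation to elementary data of the two spheres and of the hypersurface $Q\subset\R^n$. Writing a point of the torus as $(x,y)$ with $|x|=a$, $|y|=b$, the unit normal of $Q$ along $i$ is $\nu=\frac1N\bigl(\frac{x}{c^2},\frac{y}{d^2}\bigr)$ with $N=\sqrt{a^2/c^4+b^2/d^4}$ (constant on the torus), and the second fundamental form of $j$ is proportional to $\nu$. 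First I would compute $\tau(i)$. Since the product of spheres has $\tau(j\circ i)=\bigl(-\frac{p}{a^2}x,-\frac{q}{b^2}y\bigr)$ in $\R^n$, the composition law $\tau(j\circ i)=dj(\tau(i))+\trace\nabla dj(di,di)$ together with $\trace\nabla dj(di,di)=-\frac1N\bigl(\frac{p}{c^2}+\frac{q}{d^2}\bigr)\nu$ gives, after projecting onto $TQ$,
\begin{equation*}
\tau(i)=(\alpha\,x,\ \beta\,y),\qquad \alpha=\frac{K}{N^2c^2}-\frac{p}{a^2},\quad \beta=\frac{K}{N^2d^2}-\frac{q}{b^2},\quad K=\frac{p}{c^2}+\frac{q}{d^2}.
\end{equation*}
A direct check shows $\frac{\alpha a^2}{c^2}+\frac{\beta b^2}{d^2}=0$, so $\tau(i)$ is indeed tangent to $Q$, and $\tau(i)=0$ if and only if $\alpha=\beta=0$. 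One checks that \eqref{condizioneminimalita} (for which $N^2=K/(p+q)$, hence $K/N^2=p+q$) forces $\alpha=\beta=0$, which settles the minimality statement.

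For the biharmonic statement I would compute the two terms of $\tau_2(i)=-\Delta\tau(i)-\trace R^Q(di,\tau(i))di$ in a frame adapted to the product: $p$ vectors $e_a=(u_a,0)$ tangent to $S^p(a)$ and $q$ vectors $e_\mu=(0,v_\mu)$ tangent to $S^q(b)$. The key simplification is that $\nabla^i_{e_a}\tau(i)=\alpha\,e_a$ and $\nabla^i_{e_\mu}\tau(i)=\beta\,e_\mu$, so the rough Laplacian collapses to a weighted sum of the tangential projections (onto $TQ$) of the second fundamental forms of $S^p(a)$ and $S^q(b)$ in $\R^n$; this produces an explicit expression $\Delta\tau(i)=(A\,x,B\,y)$ with $A,B$ depending on $\alpha,\beta$ and on $p,q,a,b,c,d,N$.

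For the curvature term I would use the Gauss equation for the hypersurface $Q\subset\R^n$, $R^Q(X,Y)Z=\langle SY,Z\rangle SX-\langle SX,Z\rangle SY$, where $S$ is the shape operator associated with $\nu$. The same adapted frame diagonalises $S$, with $Se_a=\frac{1}{Nc^2}e_a$ and $Se_\mu=\frac{1}{Nd^2}e_\mu$, and a short computation shows $\langle S\tau(i),e_k\rangle=0$ for every frame vector, because $S\tau(i)$ is again a combination of $x$ and $y$ and these are pointwise orthogonal (in $\R^n$) to the tangent directions of the torus. Hence the whole curvature term collapses to $\trace R^Q(di,\tau(i))di=-(\trace_{T}S)\,S\tau(i)=-\frac{K}{N}\,S\tau(i)$, once more a multiple of $(x,y)$. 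Assembling the two contributions, $\tau_2(i)=0$ becomes a system of two scalar equations in the coefficients of $x$ and of $y$; I would then verify by direct substitution that the radii \eqref{condizioneproperbiharmonic}, for which $N^2=1/(cd)$, solve this system, while $\tau(i)\neq 0$ because \eqref{condizioneminimalita} fails exactly when $qc\neq pd$.

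The main obstacle is the curvature term: since $Q$ does not have constant sectional curvature, $R^Q$ must be handled through the Gauss equation with the non-umbilic shape operator $S$, and the computation only becomes tractable thanks to the product structure, which simultaneously diagonalises $S$ and forces $S\tau(i)$ to be orthogonal to the torus. Keeping track of the tangential projections onto $TQ$ and of the various signs in $\Delta\tau(i)$ is the principal source of technical bookkeeping; everything else reduces to the elementary spectral geometry of round spheres.
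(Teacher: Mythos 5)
Your proposal is correct, and its skeleton is the same as the paper's: a direct computation of $\tau$, of $\Delta\tau$, and of the curvature term in ambient coordinates, exploiting the product structure and the Gauss equation of the hypersurface $Q\subset\R^n$. Your checkpoints all agree with the paper: your $(\alpha x,\beta y)$ is the paper's $\lambda\,\eta_1^T$ from \eqref{tensionfieldtorocalcolato} (the tangency identity $\alpha a^2/c^2+\beta b^2/d^2=0$ is exactly proportionality to $\eta_1^T$), your identity $\nabla^i_{e_a}\tau=\alpha e_a$ is \eqref{derivatacovaditauinQ}, the value $N^2=1/(cd)$ under \eqref{condizioneproperbiharmonic} and the non-minimality criterion $qc\neq pd$ both match what falls out of \eqref{magica}, and your final scalar equation reproduces \eqref{magica}. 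That said, two of your choices are genuine (and efficient) repackagings of the paper's steps. First, you obtain $\tau(i)$ from the Eells--Lemaire composition law through $\R^n$, using the known tension $\bigl(-\tfrac{p}{a^2}x,-\tfrac{q}{b^2}y\bigr)$ of the product of spheres, whereas the paper traces the second fundamental form $B^T$ directly via the explicit normal $\eta_1^T$ in \eqref{secondaformaT}; the two are equivalent, but yours imports the sphere data for free. Second, and more substantially, you handle the curvature term with the shape-operator form of the Gauss equation and the single observation $\langle S\tau(i),e_k\rangle=0$ (which holds because $S\tau(i)$ is a pointwise combination of $(x,0)$ and $(0,y)$, even though $|\eta_1^Q|$ is not constant in the direction $\tau(i)$ --- your phrasing covers this correctly), collapsing $\trace R^Q(di,\tau)\,di$ to $-(\trace_T S)\,S\tau(i)$ in one stroke. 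The paper instead splits the bitension into normal and tangential components, computes the normal part through sectional curvatures \eqref{curvaturasezionale} built from $B^Q$, and separately kills the tangential part via \eqref{componentetangentenulla3} using $B^Q(X_i,\eta^T)=0$ --- which is precisely your orthogonality $\langle S\tau,e_k\rangle=0$ in disguise. Your packaging buys a cleaner, basis-light treatment of the curvature term and makes the automatic vanishing of the tangential component structurally transparent; the paper's component-wise route makes the final normal equation \eqref{magica} appear with all coefficients explicit, at the cost of two parallel computations. One cosmetic point: since both $\Delta\tau$ and the curvature term are tangent to $Q$ and orthogonal to the torus, your ``system of two scalar equations'' in the coefficients of $x$ and $y$ is really a single equation (the two are proportional via the tangency constraint), which is why the paper works with the single condition \eqref{condizionedibitensionenormalenulla}.
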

\begin{remark} We observe that, interestingly, if $c=p$ and $d=q$, then we have generalized minimal Clifford's tori, but we do not have proper biharmonic submanifolds of the type \eqref{CliffordToriimmersion}.

We also point out that, according to Theorem \ref{teorema1}, the ellipsoid $Q^{3}(c,d)$ ($p=q=1, \, c\neq d$) admits a proper biharmonic torus, while in $S^3$ there exists no genus $1$ proper biharmonic submanifold (see \cite{CMO01}).
\end{remark}
\begin{proof} We shall work essentially by using coordinates in $\R^n$, suitably restricted to the ellipsoid or to the torus, according to necessity. In particular, the splitting $$(x,y)=(x_1,\ldots,x_{p+1},y_1,\ldots,y_{q+1})$$
will be used in an obvious way, without further comments. The symbol $\langle\, ,\,\rangle  $ will denote the Euclidean scalar product (whether in $\R^{p+1}$, $\R^{q+1}$ or $\R^{n}$ will be clear from the context). We shall use a superscript $Q$ for objects concerning the ellipsoid, while the letter $T$ will appear for reference to the torus $T=S^p(a) \times S^q(b)$.

We shall need to know the algebraic conditions which ensure that a given vector field is tangent either to the torus or to the ellipsoid. More specifically, a vector field
$$
W=(X,Y) \,\,,
$$
where
$$
X=\sum_{i=1}^{p+1}\, X^i \, \frac{\partial}{\partial x_i} \quad {\rm and} \quad Y=\sum_{j=1}^{q+1}\, Y^j \, \frac{\partial}{\partial y_j} \,\, ,
$$
is tangent to $Q^{p+q+1}(c,d)$ if and only if
\begin{equation*}\label{condiztangenzaellissoide}
   \sum_{i=1}^{p+1}\, \frac{1}{c^2}\, x_i \, X^i \, + \, \sum_{j=1}^{q+1}\, \frac{1}{d^2}\, y_j \, Y^j \, = \, 0 \,\, .
\end{equation*}
In the same order of ideas,
$W$ is tangent to the torus $T$ if and only if
\begin{equation}\label{condiztangenzatoro}
   \sum_{i=1}^{p+1}\, \, x_i \, X^i \, = \,0 = \, \sum_{j=1}^{q+1}\,  y_j \, Y^j \,\, .
\end{equation}

To end preliminaries, we observe that the vector field
\begin{equation}\label{normaleunitariaellissoide}
    \eta^Q =\frac{\eta^Q_1}{|\eta^Q_1|} \,\, ,
\end{equation}
where
\begin{equation*}\label{normaleellissoide}
    \eta^Q_1 = \left( \frac{1}{c^2}\,x_1,\ldots,\,\frac{1}{c^2}\,x_{p+1},\,\frac{1}{d^2}\,y_1,\ldots,\,\frac{1}{d^2}\,y_{q+1} \right )\,\, ,
\end{equation*}
represents a unit normal vector field on the ellipsoid $Q^{p+q+1}(c,d)$. Note, for future use, that the equality
\begin{equation}\label{normacostantesutoro}
    |\eta^Q_1|^2= \frac{a^2}{c^4}+\frac{b^2}{d^4}
\end{equation}
holds on $T$. Similarly, the vector
\begin{equation}\label{normaleunitariatoro}
    \eta^T =\frac{\eta^T_1}{|\eta^T_1|} \,\, ,
\end{equation}
where
\begin{equation}\label{normaletoro}
    \eta^T_1 = \left( \frac{c^2}{a^2}\,x_1,\ldots,\,\frac{c^2}{a^2}\,x_{p+1},\,-\, \frac{d^2}{b^2}\,y_1,\ldots,\,-\, \frac{d^2}{b^2}\,y_{q+1} \right )\,\, ,
\end{equation}
represents a unit normal vector on the torus $T$ viewed as a submanifold of the ellipsoid $Q^{p+q+1}(c,d)$. We also note that
\begin{equation}\label{normacostantesutorobis}
    |\eta^T_1|^2= \frac{c^4}{a^2}+\frac{d^4}{b^2}
\end{equation}
on $T$.
In order to compute the tension and the bitension fields, it is convenient to make explicit the formulas which will enable us to calculate the relevant covariant derivatives. More precisely, following, for instance, \cite{DoCarmo}, we know that
\begin{equation}\label{derivatacovainQ}
\nabla_{W_1}^Q \, W_2=\nabla_{W_1}^{\R^n} \, W_2 - B^Q (W_1,W_2) \,\, ,
\end{equation}
where $B^Q (W_1,W_2)$ denotes the second fundamental form of the ellipsoid $ Q^{p+q+1}(c,d)$ into $\R^n$. We now need to do some work to make \eqref{derivatacovainQ} more explicit:
\begin{eqnarray}\label{calcolosecondaformaQ}
\nonumber
  B^Q (W_1,W_2) &=& -\, \langle \,\nabla_{W_1}^{\R^n} \, \eta^Q , \, W_2 \,\rangle    \, \eta^Q\\ \nonumber
   &=& -\, \langle W_1 \left ( \frac{1}{|\eta_1^Q|} \right ) \,\eta_1^Q +\,\frac{1}{|\eta_1^Q|}\, \nabla_{W_1}^{\R^n} \, \eta_1^Q , \, W_2 \,\rangle   \, \eta^Q  \\
   &=& -\, \langle \, \frac{1}{|\eta_1^Q|}\, \nabla_{W_1}^{\R^n} \, \eta_1^Q , \, W_2 \,\rangle   \, \eta^Q \,\, .
\end{eqnarray}
Next, we compute
\begin{equation}\label{derivatacovadietaQ}
\nabla_{W_1}^{\R^n} \, \eta_1^Q= \frac{1}{c^2}\,\sum_{i=1}^{p+1}\,   X_1^i \, \frac{\partial}{\partial x_i} + \,\frac{1}{d^2}\, \sum_{j=1}^{q+1}\, Y_1^j \,\frac{\partial}{\partial y_j} \,\, .
\end{equation}
Finally, using \eqref{derivatacovadietaQ} into \eqref{calcolosecondaformaQ}, we obtain:
\begin{equation}\label{espressionefinaleBinQ}
  B^Q (W_1,W_2)=  \, -\, \frac{1}{|\eta_1^Q|} \, \left [ \,\frac{1}{c^2}\,\langle X_1,X_2\rangle  + \frac{1}{d^2}\,\langle Y_1,Y_2\rangle  \right ]\, \eta^Q \,\, ,
\end{equation}
which in \eqref{derivatacovainQ} yields:
\begin{equation*}\label{derivatacovainQfinaleformula}
\nabla_{W_1}^Q \, W_2=\nabla_{W_1}^{\R^n} \, W_2 +\, \frac{1}{|\eta_1^Q|} \, \left [ \,\frac{1}{c^2}\,\langle X_1,X_2\rangle  + \frac{1}{d^2}\,\langle Y_1,Y_2\rangle  \right ]\, \eta^Q \,\, .
\end{equation*}
Now, we are in the right position to proceed to the computation of the tension field $\tau$ of our immersion \eqref{CliffordToriimmersion}. Indeed, by definition,
\begin{equation}\label{tensionfieldtoro}
    \tau= {\trace}\, B^T (\cdot, \, \cdot) \,\, ,
\end{equation}
where
\begin{eqnarray}\label{secondaformaT}
 \nonumber
     B^T (W_1, \, W_2)&=& \,-\,\langle \,\nabla_{W_1}^{Q} \, \eta^T , \, W_2 \,\rangle    \, \eta^T \\ \nonumber
     &=&-\,\frac{1}{|\eta_1^T|^2}\langle \,\nabla_{W_1}^{Q} \, \eta_1^T , \, W_2 \,\rangle    \, \eta_1^T  \\ \nonumber
     &=&-\,\frac{1}{|\eta_1^T|^2}\langle \,\nabla_{W_1}^{\R^n} \, \eta_1^T , \, W_2 \,\rangle    \, \eta_1^T  \\
     &=&-\,\frac{1}{|\eta_1^T|^2} \, \left [ \,\frac{c^2}{a^2}\,\langle X_1,X_2\rangle  - \,\frac{d^2}{b^2}\,\langle Y_1,Y_2\rangle  \right ]\, \, \eta_1^T \,\, .
\end{eqnarray}
Let now $X_i$ , $i=1,\ldots,p$ and $Y_j$ , $j=1,\ldots,q$, be local orthonormal bases of $S^p(a)$ and $S^q(b)$ respectively.
By using \eqref{secondaformaT} in \eqref{tensionfieldtoro} we find:
\begin{eqnarray}\label{tensionfieldtorocalcolato}
    \tau&= & \sum_{i=1}^p\, B^T \left ( (X_i,0),(X_i,0)\right ) +\,\sum_{i=1}^q\, B^T \left ( (0,Y_j),(0,Y_j)\right ) \\ \nonumber
    &= & -\, \frac{1}{|\eta_1^T|^2} \, \left [ \frac{p \, c^2}{a^2}-\,\frac{q \, d^2}{b^2} \right ] \, \eta_1^T \\ \nonumber
    &= &\lambda \,\eta_1^T \,\, ,
\end{eqnarray}
where, taking into account \eqref{normacostantesutorobis}, we have set
\begin{equation*}\label{valoredilambda}
    \lambda= \,- \, \left [ \frac{c^4}{a^2}+\frac{d^4}{b^2} \right ]^{-1}\, \left [ \frac{p \, c^2}{a^2}-\,\frac{q \, d^2}{b^2} \right ] \,\, .
\end{equation*}
In particular, using \eqref{condizioneimmersionetoroinellissoide}, it is now immediate to conclude that \eqref{condizioneminimalita} is equivalent to the minimality of the immersion. \\

Next, we proceed to the computation of the bitension field $\tau_2$. To this purpose, we must apply \eqref{bitensionfield} in the case that $\varphi=i$. We begin with the computation of $\Delta \tau$ . It is convenient to choose a \emph{geodesic} local orthonormal frame obtained from geodesic local orthonormal frames on each factor of the torus. Under this assumption the terms $\nabla^{T}_{e_i}\, e_i$ in the formula \eqref{roughlaplacian} vanish (note that this simplification is acceptable because we shall not need to compute covariant derivatives of higher order). So the expression for the rough Laplacian \eqref{roughlaplacian} in our context reduces to:
\begin{equation}\label{roughlaplacianridotto}
    \Delta \tau = -\, \left [\, \sum_{i=1}^p \, \nabla_{X_i}^Q \, \left ( \nabla_{X_i}^Q \, \tau \right ) \, + \sum_{j=1}^q \, \nabla_{Y_j}^Q \, \left ( \nabla_{Y_j}^Q \, \tau \right ) \,
 \right ] \,\, ,
 \end{equation}
 where, to simplify notation, we have written $X_i$ for $(X_i,0)$ and $Y_j$ for $(0,Y_j)$ . Using \eqref{derivatacovainQ}, \eqref{normaletoro} and \eqref{tensionfieldtorocalcolato} we find:
 \begin{eqnarray}\label{derivatacovaditauinQ}
   \nabla_{X_i}^Q \, \tau &=& \lambda \, \nabla_{X_i}^Q \, \eta_1^T \nonumber \\ \nonumber
    &=& \lambda \, \nabla_{X_i}^{\R^n} \, \eta_1^T +\, \frac{\lambda}{|\eta_1^Q|^2}\,\left [ \frac{1 }{c^2}\, \langle X_i,\eta_1^T\rangle  + \frac{1 }{d^2}\, \langle0,\eta_1^T\rangle  \right ] \, \eta_1^Q\\ \nonumber
    &=& \lambda \, \frac{c^2}{a^2} \, X_i + \, \frac{\lambda}{|\eta_1^Q|^2}\,[\,0\,]\, \, \eta_1^Q \\
    &=&\lambda \, \frac{c^2}{a^2} \, X_i \,\, .
 \end{eqnarray}
Next, using first \eqref{derivatacovaditauinQ},
\begin{eqnarray}\label{biderivatacovaditauinQ}
\nonumber
  \nabla_{X_i}^Q \,\left ( \nabla_{X_i}^Q \, \tau \right )&=& \lambda \, \frac{c^2}{a^2} \, \nabla_{X_i}^Q \,X_i\\ \nonumber
    &=& \lambda \, \frac{c^2}{a^2} \,\left [\, \nabla_{X_i}^T \, X_i+B^T\left( X_i,X_i\right)\, \right ]\\
    &=& -\,\lambda \, \frac{c^2}{a^2} \,\frac{1}{|\eta_1^T|^2}\, \left [\,\frac{c^2}{a^2}\,\langle X_i,X_i\rangle   \, \right ] \,\eta_1^T \,\,,
 \end{eqnarray}
where, in order to obtain the last equality, we have used the fact that our orthonormal frame is geodesic and also \eqref{secondaformaT}. Now, a very similar computation leads us to
\begin{equation}\label{biderivatacovaditauinQbis}
   \nabla_{Y_j}^Q \,\left ( \nabla_{Y_j}^Q \, \tau \right ) = -\,\lambda \, \frac{d^2}{b^2} \,\frac{1}{|\eta_1^T|^2}\, \left [\,\frac{d^2}{b^2}\,\langle Y_j,Y_j\rangle   \, \right ] \,\eta_1^T \,\,.
\end{equation}
Putting together \eqref{roughlaplacianridotto}, \eqref{biderivatacovaditauinQ} and \eqref{biderivatacovaditauinQbis} we obtain
\begin{equation}\label{deltatauconciso}
    \Delta \, \tau \, = \, \mu \,\,  \eta_1^T \,\,,
\end{equation}
where, taking into account \eqref{normacostantesutorobis}, we have defined the constant $\mu$ as follows:
\begin{equation}\label{definizionedimu}
    \mu = \frac{\lambda}{|\eta_1^T|^2} \, \left[\,\frac{p\,c^4}{a^4}+\frac{q\,d^4}{b^4}\, \right ]
\end{equation}
(note that, if \eqref{condizioneminimalita} does not hold, then $\lambda \neq 0$, so that $\mu \neq 0$ and the immersion is not minimal).

By way of summary, the previous computations have led us to the following conclusion:
\begin{equation}\label{bitensioneconcurvaturanonesplicita}
    \tau_2 \,=\, - \, \left [\,\mu \,\eta_1^T + {\trace}\, R^Q (d\,i, \tau)\,d\,i\,\, \right ] \,\,.
\end{equation}
We have to investigate for which values (if any) of $a,\,b$ the bitension $\tau_2$ vanishes. In order to deal in an efficient way with the curvature tensor, we shall study the vanishing of normal and tangential components separately. In particular, we shall prove that the normal component of $\tau_2$ is identically zero if and only if \eqref{condizioneproperbiharmonic} holds. The proof of the theorem will then be completed by the verification that the tangential part of $\tau_2$ vanishes for all values of $a$ and $b$. So, let us first study whether, for suitable values of $a$ and $b$, we can have
\begin{equation}\label{condizionedibitensionenormalenulla}
    \langle\, \tau_2, \, \eta_1^T \,\rangle   \,= \, 0\,\, .
\end{equation}
From \eqref{bitensioneconcurvaturanonesplicita} and \eqref{definizionedimu} we have:
$$
-\, \langle \tau_2, \, \eta_1^T \rangle   = \lambda \,
  \left[\,\left (\frac{p\,c^4}{a^4}+\frac{q\,d^4}{b^4}
  \right ) + \sum_{i=1}^p\, \langle R^Q(X_i, \tau)X_i,\eta_1^T\rangle  \,+ \sum_{i=1}^q\, \langle R^Q(Y_j, \tau)Y_j,\eta_1^T\rangle  \, \right ]\,\,.
$$
Next, we observe that
\begin{equation}\label{curvaturasezionalerimescolata}
    \langle R^Q(X_i, \tau)X_i,\eta_1^T\rangle  = -\, \frac{\langle R^Q(X_i, \tau)\eta_1^T,X_i\rangle  }{|\eta_1^T|^2} \, |\eta_1^T|^2=K^Q(X_i,\eta^T)\, |\eta_1^T|^2 \,\, ,
\end{equation}
where $K^Q(X_i,\eta^T)$ denotes sectional curvature, which (see \cite{DoCarmo}) can be expressed by means of:
\begin{equation}\label{curvaturasezionale}
    K^Q(X_i,\eta^T)=\langle B^Q(X_i,X_i),\,B^Q(\eta^T,\eta^T)\rangle  -\langle B^Q(X_i,\eta^T),
\,B^Q(X_i,\eta^T)\rangle   \,\, .
\end{equation}
By using \eqref{curvaturasezionalerimescolata} in \eqref{curvaturasezionale} and performing a computation which, according to \eqref{espressionefinaleBinQ}, uses
$$
B^Q(X_i,X_i)= -\, \frac{1}{|\eta_1^Q|^2}\, \frac{1}{c^2} \, \langle X_i,X_i\rangle  \, \eta_1^Q \,\, ,
$$
$$
B^Q(\eta_1^T,\eta_1^T)\rangle  =  -\, \frac{1}{|\eta_1^Q|^2}\, \left ( \frac{c^2}{a^2}+\frac{d^2}{b^2}\right ) \, \eta_1^Q \,\, ,
$$
\begin{equation} \label{utiledopopercomponentetangente}
B^Q(X_i,\eta^T)= 0 \,\, ,
\end{equation}
we find:
\begin{equation}\label{curvaturasezionaleformulaquasifinale}
    \langle R^Q(X_i, \eta_1^T)X_i,\eta_1^T\rangle  = -\, \frac{1}{|\eta_1^Q|^2}\, \frac{1}{c^2} \, \langle X_i,X_i\rangle  \, \left ( \frac{c^2}{a^2}+\frac{d^2}{b^2}\right ) \,\, .
\end{equation}
In a very similar fashion we also compute:
\begin{equation}\label{curvaturasezionaleformulaquasifinalebis}
    \langle R^Q(Y_j, \eta_1^T)Y_j,\eta_1^T\rangle  = -\, \frac{1}{|\eta_1^Q|^2}\, \frac{1}{d^2} \, \langle Y_j,Y_j\rangle  \, \left ( \frac{c^2}{a^2}+\frac{d^2}{b^2}\right ) \,\, .
\end{equation}
Putting together \eqref{curvaturasezionaleformulaquasifinale}, \eqref{curvaturasezionaleformulaquasifinalebis} and \eqref{bitensioneconcurvaturanonesplicita} it is easy to obtain the following conclusion:
\begin{equation}\label{magica}
 \langle  \tau_2, \, \eta_1^T \rangle   = -\, \lambda \, \left \{
  \left[\,\frac{p\,c^4}{a^4}+\frac{q\,d^4}{b^4}\, \right ] -\,\frac{1}{|\eta_1^Q|^2}\,\left[\,\frac{c^2}{a^2}+\frac{d^2}{b^2}\, \right ]\,\left[\,\frac{p}{c^2}+\frac{q}{d^2}\, \right ] \,\right \} \,\, .
\end{equation}
Now, using \eqref{normacostantesutoro} and \eqref{condizioneimmersionetoroinellissoide} in \eqref{magica}, it is not difficult to check that \eqref{condizionedibitensionenormalenulla} holds if and only if \eqref{condizioneproperbiharmonic} is satisfied.
At this stage, we can say that the proof of the theorem will be completed if we show that
\begin{equation}\label{condizionedibitensionetangentenulla}
    \langle \, \tau_2, \, W \,\rangle   \,= \, 0
\end{equation}
for any vector field $W$ which is tangent to the torus. Taking into account \eqref{bitensioneconcurvaturanonesplicita}, we see that \eqref{condizionedibitensionetangentenulla} is equivalent to:
\begin{equation*}\label{componentetangentenulla1}
    \langle \, {\trace}\, R^Q (d\,i, \tau)\,d\,i\,,\, W\,\rangle   =0 \,\, .
\end{equation*}
Because of \eqref{tensionfieldtorocalcolato}, it is enough to show that
\begin{equation*}\label{componentetangentenulla2}
    \langle \,  R^Q (X, \,\eta_1^T)\,X,\, W\,\rangle   =0
\end{equation*}
holds if $X,\,W$ are arbitrary vectors tangent to $T$. But, by the Gauss equation (see \cite{DoCarmo}), we deduce:
\begin{equation}\label{componentetangentenulla3}
    \langle \,  R^Q (X, \,\eta_1^T)\,X,\, W\,\rangle   =\langle \,B^Q(X,W),B^Q(\eta_1^T,X)\,\rangle   - \langle \,,B^Q(\eta_1^T,W),\,B^Q(X,X)\,\rangle   =0 \,\,,
\end{equation}
where, for the last equality, we have used \eqref{utiledopopercomponentetangente}.
\end{proof}

Next, we study biharmonic submanifolds into Euclidean ellipsoids of revolution $Q^{p+1}(c,d)$ defined as follows:
\begin{equation*}\label{def-ellissoide-rev}
    Q^{p+1}(c,d)=\left \{ (x,y)\in \, \R^{p+1} \times \R= \R^n \,\, : \,\, \frac{|x|^2}{c^2}+ \frac{y^2}{d^2} =1 \right \} \,\,,
\end{equation*}
where $c,\,d$ are fixed positive constants. In this case, the symmetry of $Q^{p+1}(c,d)$ makes it natural to look for biharmonic hyperspheres. More precisely, we shall study isometric immersions of the following type:
\begin{equation}\label{hyperspheres}
\left .
  \begin{array}{cccccc}
    i\,\,:\,\,& S^p(a) &\times& \{b \}& \longrightarrow&Q^{p+1}(c,d)\\
    &&&&& \\
    &(x_1,\ldots,x_{p+1}&,&b)&\longmapsto& (x_1,\ldots,x_{p+1},b\,\, ) \,\, , \\
  \end{array}
\right .
\end{equation}
where $i$ denotes the inclusion and the constants $a,b$ must again satisfy the condition
\begin{equation}\label{condizioneimmersioneipersferainellissoide}
    \frac{a^2}{c^2}+  \frac{b^2}{d^2}= 1
\end{equation}
(note that $a$ is a radius, so it is positive, while the only request on $b$ is: $|b| < d$).

In this context, we shall prove the following result:
\begin{theorem}\label{teorema2}Let $i\,:\,S^p(a) \times \{b \} \to Q^{p+1}(c,d)$ be an isometric immersion as in \eqref{hyperspheres}. If
\begin{equation}\label{condizioneminimalitaipersfera}
     a^2 = c^2 \,\, ; \quad b = 0 \, \,
\end{equation}
then the immersion is minimal (this is the case of the equator hypersphere). If
\begin{equation}\label{condizioneproperbiharmonicipersfera}
    a = c\, \,\sqrt { \frac{c}{c+d}} \,\, ; \quad b = \pm\, \,  d \,\, \sqrt { \frac{d}{c+d} }\,\, ,
\end{equation}
then the immersion is proper biharmonic.
\end{theorem}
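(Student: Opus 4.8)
The plan is to recognize Theorem \ref{teorema2} as the specialization $q=0$ of Theorem \ref{teorema1}, with the sole caveat that the degenerate locus $b=0$ must be handled separately. First I would set up the two normal fields exactly as before: the unit normal to the ellipsoid is $\eta^Q=\eta^Q_1/|\eta^Q_1|$ with $\eta^Q_1=(c^{-2}x_1,\dots,c^{-2}x_{p+1},d^{-2}b)$, and the field normal to the hypersphere inside $Q^{p+1}(c,d)$ is obtained by setting $q+1=1$ in \eqref{normaletoro}, namely $\eta^T_1=(c^2a^{-2}x_1,\dots,c^2a^{-2}x_{p+1},-d^2b^{-1})$, which is well defined as long as $b\neq0$. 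On $S^p(a)\times\{b\}$ one has $|\eta^Q_1|^2=a^2c^{-4}+b^2d^{-4}$ and $|\eta^T_1|^2=c^4a^{-2}+d^4b^{-2}$, the $q=0$ instances of \eqref{normacostantesutoro} and \eqref{normacostantesutorobis}. Since neither the formula \eqref{espressionefinaleBinQ} for $B^Q$ nor the orthogonality \eqref{utiledopopercomponentetangente} involves $q$, every identity in the proof of Theorem \ref{teorema1} transcribes verbatim after deleting the $Y_j$-sums.

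For the minimal case I would argue separately at $b=0$. When $b=0$ the constraint \eqref{condizioneimmersioneipersferainellissoide} forces $a^2=c^2$, and $S^p(c)\times\{0\}$ is the equator $\{y=0\}$, i.e.\ the fixed-point set of the isometric reflection $(x,y)\mapsto(x,-y)$ of the ellipsoid; hence it is totally geodesic and in particular minimal, which is \eqref{condizioneminimalitaipersfera}. For $b\neq0$ the tension field \eqref{tensionfieldtorocalcolato} with $q=0$ reads $\tau=\lambda\,\eta^T_1$ with $\lambda=-\bigl[c^4a^{-2}+d^4b^{-2}\bigr]^{-1}\,pc^2a^{-2}$, which never vanishes; thus away from the equator the immersion is non-minimal, consistently with the fact that \eqref{condizioneminimalita} with $q=0$ returns exactly $a^2=c^2,\ b=0$.

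For the proper biharmonic case, with $b\neq0$ fixed, I would reproduce the bitension computation of Theorem \ref{teorema1} with $q=0$. The derivative \eqref{derivatacovaditauinQ} still gives $\nabla^Q_{X_i}\tau=\lambda c^2a^{-2}X_i$ (its derivation uses only $\langle X_i,x\rangle=0$), so \eqref{deltatauconciso} holds with $\mu=\lambda|\eta^T_1|^{-2}\,pc^4a^{-4}$, and the tangential part of $\tau_2$ vanishes identically by the Gauss equation \eqref{componentetangentenulla3} together with \eqref{utiledopopercomponentetangente}. It then remains to impose vanishing of the normal part: specializing \eqref{magica} to $q=0$ gives
\begin{equation*}
\langle \tau_2,\eta^T_1\rangle=-\,\lambda\left\{\frac{p\,c^4}{a^4}-\frac{1}{|\eta^Q_1|^2}\left[\frac{c^2}{a^2}+\frac{d^2}{b^2}\right]\frac{p}{c^2}\right\},
\end{equation*}
and since $\lambda\neq0$ for $b\neq0$, I would set the brace to zero, substitute $|\eta^Q_1|^2=a^2c^{-4}+b^2d^{-4}$ and the constraint \eqref{condizioneimmersioneipersferainellissoide}, and solve. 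Writing $u=a^2/c^2$, $v=b^2/d^2$ with $u+v=1$, the brace reduces to $(1-u)^2c^2=u^2d^2$, whence $u=c/(c+d)$; this is precisely \eqref{condizioneproperbiharmonic} with $q=0$, i.e.\ \eqref{condizioneproperbiharmonicipersfera}. Because $b\neq0$ there, the resulting immersions are proper.

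The one genuinely new point — and the only place where care is needed — is the degeneration of $\eta^T_1$ at $b=0$, where the entry $-d^2/b$ blows up; this is why the minimal case cannot be read off the generic formula for $\lambda$ and must instead be obtained from the reflection symmetry (equivalently, by checking that $\lambda\,\eta^T_1\to0$ as $b\to0$). Everything else is a routine transcription of Theorem \ref{teorema1}, so I expect no substantial obstacle beyond verifying that the normal-component equation indeed reduces to the stated radii.
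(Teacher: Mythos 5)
Your proposal is correct and follows essentially the same route as the paper: the authors likewise dispose of the case $b\neq 0$ by setting $q=0$ throughout the proof of Theorem \ref{teorema1} (same normal fields, same $\lambda$ and $\mu$, same Gauss-equation argument for the tangential part) and then check that the vanishing of the normal component, their equation \eqref{magicaipersfera}, is equivalent to \eqref{condizioneproperbiharmonicipersfera}, treating $b=0$ separately exactly as you do. The only (immaterial) difference is at the equator, where you invoke the fixed-point set of the isometric reflection $(x,y)\mapsto(x,-y)$, while the paper simply notes $\eta^S=(0,\ldots,1)$ and reads off from \eqref{secondaformaT} that the second fundamental form vanishes identically, so the equator hypersphere is totally geodesic.
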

\begin{proof} Again, we shall use a superscript $Q$ for objects concerning the ellipsoid, while the letter $S$ will appear for reference to the hypersphere $S=S^p(a) \times \{b\}$. Essentially, the proof follows the arguments of Theorem \ref{teorema1} and most of the calculations can be performed by setting $q=0$ in the formulas above: for this reason, we limit ourselves to point out the relevant differences only. First, let us assume that $b\neq0$ : normal vectors $\eta_1^Q$, $\eta^Q$, $\eta_1^S$ and $\eta^S$ can be introduced precisely as in \eqref{normaleunitariaellissoide}--\eqref{normacostantesutorobis}. We also note that, since $b\neq0$, \eqref{condiztangenzatoro} implies that a tangent vector to $S$ must be of the form
\begin{equation}\label{vettoretangenteipersfera}
    W=(X,0) \,\, .
\end{equation}
Taking into account \eqref{vettoretangenteipersfera} we easily obtain
\begin{equation*}\label{tensionfieldipersfera}
    \tau= \, \lambda \, \eta_1^S \,\, ,
\end{equation*}
where
\begin{equation*}\label{lambdaipersfera}
    \lambda= \,- \, \left [ \frac{c^4}{a^2}+\frac{d^4}{b^2} \right ]^{-1}\, \left [\, \frac{p \, c^2}{a^2}\, \right ]
\end{equation*}
(note that $\lambda\neq0$, so that in this case the hypersphere is not minimal).

In the computation of $\Delta \, \tau$ only the terms $\nabla_{X_i}^Q \, \left ( \nabla_{X_i}^Q \, \tau \right ) $ in \eqref{roughlaplacianridotto} are relevant: this fact leads us to the expression
\begin{equation}\label{roughlaplacianipersfera}
    \Delta \, \tau= \mu\, \eta_1^S \,\, ,
\end{equation}
where now
\begin{equation*}\label{muipersfera}
    \mu= \frac{\lambda}{|\eta_1^S|^2} \, \left[\,\frac{p\,c^4}{a^4}\, \right ] \,\, .
\end{equation*}
Also the calculation involving the curvature terms follows the lines above and leads us to
\begin{equation}\label{magicaipersfera}
    \langle  \tau_2, \, \eta_1^S \rangle   = -\, \lambda \, \left \{
  \left[\,\frac{p\,c^4}{a^4}\, \right ] -\,\frac{1}{|\eta_1^Q|^2}\,\left[\,\frac{c^2}{a^2}+\frac{d^2}{b^2}\, \right ]\,\left[\,\frac{p}{c^2}\, \right ] \,\right \} \,\, .
\end{equation}
Now, inspection of \eqref{magicaipersfera} shows that \eqref{condizioneproperbiharmonicipersfera} is equivalent to the vanishing of the normal component of the bitension. Finally, an argument as above shows that the tangential component of the bitension always vanishes, so ending the case $b\neq0$.

In the case that $b=0$ we observe that
\begin{equation}\label{normaleunitariaipersferab=0}
    \eta^S=(0,\,\ldots,\,1) \,\, .
\end{equation}
Using \eqref{normaleunitariaipersferab=0} in \eqref{secondaformaT} it is easy to conclude that, in this case, the second fundamental form of $S$ vanishes identically, so that the equator hypersphere is totally geodesic and so minimal, a fact which ends the theorem.
\end{proof}

\section{Composition properties}

Our first result is:
\begin{theorem}\label{composition-property-theorem1} Let $i: S^p(a)\to Q^{p+1}(c,d)$ be a proper biharmonic immersion as in Theorem \ref{teorema2}, and let $\varphi : M^m \to S^p(a)\,$be a minimal immersion. Then $i\circ \varphi : M^m\to Q^{p+1}(c,d)\,$is a proper biharmonic immersion.
\end{theorem}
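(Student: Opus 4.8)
The plan is to exploit the special structure of the biharmonic hypersphere $i: S^p(a) \to Q^{p+1}(c,d)$ established in Theorem \ref{teorema2}, combined with the minimality of $\varphi$, so that the bitension field of the composition $i \circ \varphi$ can be computed from that of $i$ with only harmless extra terms. The governing principle is the composition law for the tension field: for a composition $\psi = i \circ \varphi$ one has the well-known formula
\begin{equation*}
\tau(i \circ \varphi) = d i \left ( \tau(\varphi) \right ) + {\trace}\, \nabla d i \left ( d \varphi, d \varphi \right )\,\, .
\end{equation*}
Since $\varphi$ is minimal, $\tau(\varphi) = 0$, and the first term vanishes. Moreover, from Theorem \ref{teorema2} the hypersphere $S^p(a) = S^p(a) \times \{b\}$ sits in $Q^{p+1}(c,d)$ with second fundamental form $\nabla d i = B^S$ proportional to the unit normal $\eta^S$; in fact $\tau(i) = \lambda\, \eta_1^S$ is a (constant-length) normal field along $i$. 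Taking the trace against the orthonormal frame coming from $\varphi$ and using that $\varphi$ is an isometric immersion, I expect $\tau(i \circ \varphi)$ to again be a constant multiple of $\eta_1^S$, precisely reproducing the tension field of $i$ up to the same scalar $\lambda$.

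First I would record that, because $\varphi$ is minimal and isometric, the tension field of the composition reduces to $\tau(i\circ\varphi) = {\trace}\, B^S(d\varphi, d\varphi)$, which by the umbilicity-type expression \eqref{secondaformaT} for $B^S$ equals $\lambda\, \eta_1^S$ evaluated along $\varphi$. Thus $i \circ \varphi$ has exactly the same normal tension field as $i$ itself, and in particular it is \emph{not} minimal (since $\lambda \neq 0$ by Theorem \ref{teorema2}), establishing properness provided the composition turns out biharmonic. Next I would turn to the bitension field, applying \eqref{bitensionfield} to $\psi = i \circ \varphi$: I need $\Delta \tau(\psi)$ and the curvature term ${\trace}\, R^Q(d\psi, \tau(\psi))\, d\psi$. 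For the rough Laplacian, I would again use a geodesic orthonormal frame on $M^m$ and the decomposition \eqref{derivatacovainQ} of the connection; the key point is that $\nabla^Q_{d\varphi(e_k)} \eta_1^S$ behaves just as $\nabla^Q_{X_i}\eta_1^S$ did in Theorem \ref{teorema2}, because $d\varphi(e_k)$ is tangent to $S^p(a)$ and hence to the hypersphere.

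The heart of the matter, and the step I expect to be the main obstacle, is controlling the curvature term ${\trace}\, R^Q(d\psi, \tau(\psi))\, d\psi$ and matching both normal and tangential components with the corresponding quantities for $i$. The normal component computation should parallel \eqref{magicaipersfera} almost verbatim: because $\tau(\psi) = \lambda \eta_1^S$ and each $d\psi(e_k) = d\varphi(e_k)$ is a unit tangent vector to $S^p(a)$, the sectional-curvature expansion via \eqref{curvaturasezionale} together with $B^Q(X,\eta^S) = 0$ (the analogue of \eqref{utiledopopercomponentetangente}) yields the same bracket, so that $\langle \tau_2(\psi), \eta_1^S \rangle$ vanishes under precisely condition \eqref{condizioneproperbiharmonicipersfera}, which holds by hypothesis. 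The subtle point is the tangential component: I must verify $\langle \tau_2(\psi), W \rangle = 0$ for every $W$ tangent to $M^m$ pushed into $S^p(a)$. Here I would invoke the Gauss equation exactly as in \eqref{componentetangentenulla3}, the vanishing again being forced by $B^Q(X,\eta^S)=0$; the only genuine difference from Theorem \ref{teorema2} is that $\varphi$ need not be totally geodesic inside $S^p(a)$, so I must check that its own second fundamental form $B^\varphi$ contributes nothing to the $Q$-curvature terms. Since $\varphi$ is minimal, the trace of $B^\varphi$ is zero, and the remaining terms involve only the \emph{intrinsic} curvature of $S^p(a)$ paired with $\eta_1^S$, which the Gauss equation of $S^p(a) \subset Q^{p+1}(c,d)$ reduces to the same umbilic expression. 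Assembling the vanishing of both components of $\tau_2(\psi)$ then gives biharmonicity, and combined with $\lambda \neq 0$ we conclude that $i \circ \varphi$ is proper biharmonic.
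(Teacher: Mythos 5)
Your proposal follows essentially the same route as the paper: the composition law for the tension field plus minimality of $\varphi$ gives $\tau(i\circ\varphi)={\trace}\,B^S(d\varphi,d\varphi)$, a nonzero constant multiple of $\eta^S$; one then computes the rough Laplacian of this normal field in a geodesic frame on $M^m$ and the curvature trace via the Gauss equation, the tangential part vanishing because $B^Q(X,\eta^S)=0$ and the normal part vanishing precisely under \eqref{condizioneproperbiharmonicipersfera}. Two slips are worth correcting, neither fatal. First, $\tau(i\circ\varphi)$ is \emph{not} ``the same scalar $\lambda$'' as $\tau(i)$: the paper finds $\tau(i\circ\varphi)=-m\,(c^2/a^2)\,|\eta_1^S|^{-1}\eta^S$ in \eqref{tension-composto}, so the coefficient carries $m=\dim M$ rather than $p$. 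This discrepancy is harmless only because the factor $m$ multiplies both the Laplacian term \eqref{roughlaplacianM} and the curvature term \eqref{terminecurvaturaincompositionproperty} and cancels, leaving the $m$-independent condition \eqref{altramagica}, which coincides with the biharmonicity condition for $i$ itself; this cancellation, on which your argument implicitly rests, is the structural reason the composition property holds for every $m$ and should have been made explicit. Second, you locate the possible contribution of $B^\varphi$ in the $Q$-curvature terms, but ${\trace}\,R^Q(d\psi,\tau)\,d\psi$ is pointwise algebraic in $d\psi$ and $\tau$ and involves no derivatives of $\varphi$, so $B^\varphi$ cannot appear there. It appears instead in the rough Laplacian: with a geodesic frame on $M^m$ one has $\nabla^Q_{d\psi(e_k)}\,d\psi(e_k)=B^\varphi(e_k,e_k)+B^S\left(d\varphi(e_k),d\varphi(e_k)\right)$, and it is exactly the minimality $\sum_k B^\varphi(e_k,e_k)=\tau(\varphi)=0$ --- invoked a second time, beyond the tension-field step --- that kills the extra tangential term and yields \eqref{roughlaplacianM}. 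Your instinct that minimality must be used again is correct; only its location in the computation is misplaced.
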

\begin{proof} Let $W_i$ , $i=1,\, \ldots, m $ , be a local orthonormal frame on $M^m$. To simplify notation, for a tangent vector $W$ to $M^m$, we write $W$ for both $d \varphi (W)$ and $di \left (d \varphi (W)\right )$ . The composition law for the tension field (see \cite{EL83}), together with the minimality of $\varphi$, gives:
\begin{eqnarray}\label{composition-tensionfield}
  \tau (i\circ \varphi) &=& \sum_{i=1}^m \, \nabla d\,i \left ( W_i,W_i \right ) + \tau(\varphi)\nonumber \\ \nonumber
  &=& \sum_{i=1}^m \, \nabla d\,i \left ( W_i,W_i \right )\\
   &=& \sum_{i=1}^m \, B^S \left ( W_i,W_i \right ) \,\, .
\end{eqnarray}
Now, adapting the calculation of \eqref{secondaformaT}, we have:
\begin{equation}\label{secondaformaS}
    B^S \left ( W_i,W_i \right )= - \, \frac{c^2}{a^2}\, \frac{1}{|\eta_1^S|}\, \langle  W_i,W_i\rangle   \, \eta^S \,\, .
\end{equation}
Next, using \eqref{secondaformaS} in \eqref{composition-tensionfield}, we obtain
\begin{equation}\label{tension-composto}
   \tau (i\circ \varphi)= - \, m \,  \frac{c^2}{a^2}\, \frac{1}{|\eta_1^S|}\,  \eta^S \,\, .
\end{equation}
In particular, we deduce from \eqref{tension-composto} that $i\circ \varphi$ is not minimal and we proceed to the computation of the bitension. For convenience, we set
$$
\nu =  \, m\, \frac{c^2}{a^2}\, \frac{1}{|\eta_1^S|}\, \, .
$$
Using \eqref{tension-composto} we have:
\begin{eqnarray}\label{bitension-composto}
   \tau_2 (i\circ \varphi) &=& -\, \Delta^M \tau (i\circ \varphi)- \sum_{i=1}^m \,R^Q(W_i,\tau (i\circ \varphi) )W_i\nonumber \\
   &=&  \nu \, \left [ \,\Delta^M \eta^S +  \sum_{i=1}^m \,R^Q(W_i, \eta^S) )W_i \, \right ] \,\, .
\end{eqnarray}
Next, we study separately the two terms in the right-hand side of \eqref{bitension-composto}. First, computing as in \eqref{roughlaplacianipersfera} (with $p$ replaced by $m$), we find
\begin{equation}\label{roughlaplacianM}
    \Delta^M \eta^S= \left [ \, \frac{m}{|\eta_1^S|^2}\, \frac{c^4}{a^4}\right ] \,  \eta^S \, \, .
\end{equation}
Second, using the Gauss equation as in \eqref{componentetangentenulla3}, we obtain:
\begin{equation}\label{terminecurvaturaincompositionproperty}
\langle \,\sum_{i=1}^m \,R^Q(W_i, \eta^S) )W_i, \eta^S\,\rangle  =-\, m\, \frac{1}{|\eta_1^S|^2} \,\frac{1}{|\eta_1^Q|^2}\,\frac{1}{c^2}\, \left( \frac{c^2}{a^2}+\frac{d^2}{b^2}
 \right ) \,\, ,
 \end{equation}
 and
 \begin{equation}\label{terminecurvaturatangenteincompositionproperty}
\langle \,\sum_{i=1}^m \,R^Q(W_i, \eta^S) )W_i, W\,\rangle  =0
 \end{equation}
 for all vector $W$ which is tangent to $S$ . Putting together \eqref{bitension-composto}--\eqref{terminecurvaturatangenteincompositionproperty} we conclude that $\tau_2 (i\circ \varphi)$ is parallel to $\eta^S$ and vanishes if and only if
 \begin{equation}\label{altramagica}
  \left \{
  \left[\,\frac{c^4}{a^4}\, \right ] -\,\frac{1}{|\eta_1^Q|^2}\,\left[\,\frac{c^2}{a^2}+\frac{d^2}{b^2}\, \right ]\,\left[\,\frac{1}{c^2}\, \right ] \,\right \}=0 \,\, .
 \end{equation}
 But \eqref{altramagica} is equivalent to the two conditions \eqref{condizioneproperbiharmonicipersfera} and \eqref{condizioneimmersioneipersferainellissoide}, so that the proof is completed. \end{proof}

In a spirit similar to the previous theorem, we also obtain the following result:
\begin{theorem}\label{composition-property-theorem2} Let $i: S^p(a)\times S^q(b)\to Q^{p+q+1}(c,d)$ be a proper biharmonic immersion as in Theorem \ref{teorema1}, and let $\varphi_1 : M_1^{m_1} \to S^p(a)\,$, $\varphi_2 : M_2^{m_2} \to S^q(b)\,$be two minimal immersions. Then $i\circ (\varphi_1 \times \varphi_2 ): M_1^{m_1} \times M_2^{m_2} \to Q^{p+q+1}(c,d)\,$is a proper biharmonic immersion.
\end{theorem}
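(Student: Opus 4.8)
The plan is to mimic the proof of Theorem \ref{teorema1}, exploiting that $\psi := \varphi_1\times\varphi_2$ is a \emph{minimal} isometric immersion of $M_1^{m_1}\times M_2^{m_2}$ into the torus $T=S^p(a)\times S^q(b)$: the mean curvature of a Riemannian product immersion is the sum of the mean curvatures of the factors, so it vanishes precisely because both $\varphi_1$ and $\varphi_2$ are minimal. Writing $\Phi=i\circ\psi$ and letting $\{E_\alpha\}$ be a local orthonormal frame on $M_1\times M_2$, the composition formula for the tension field together with the minimality of $\psi$ gives $\tau(\Phi)=\sum_\alpha B^T(d\psi(E_\alpha),d\psi(E_\alpha))$. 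Since each $d\psi(E_\alpha)$ is a unit vector of ``$X$-type'' (coming from $M_1$) or ``$Y$-type'' (coming from $M_2$), formula \eqref{secondaformaT} yields, exactly as in \eqref{tensionfieldtorocalcolato},
\begin{equation*}
\tau(\Phi)=\lambda'\,\eta_1^T,\qquad \lambda'=-\left[\frac{c^4}{a^2}+\frac{d^4}{b^2}\right]^{-1}\left[\frac{m_1\,c^2}{a^2}-\frac{m_2\,d^2}{b^2}\right].
\end{equation*}
I would then note that, formally, the remaining computations are those of Theorem \ref{teorema1} with $p,q$ replaced by $m_1,m_2$.

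First I would compute $\Delta\tau(\Phi)$. Choosing $\{E_\alpha\}$ geodesic at a point and using that $\nabla^Q_{d\psi(E_\alpha)}\eta_1^T$ is proportional to $d\psi(E_\alpha)$ (as in \eqref{derivatacovaditauinQ}, because $d\varphi_1(\cdot)$ is orthogonal to the position vector $x$ and $d\varphi_2(\cdot)$ to $y$), the second covariant derivative reduces to evaluating $\sum_\alpha B^\Phi(E_\alpha,E_\alpha)$. Here lies the one genuine difference from Theorem \ref{teorema1}: the pushed-forward frame $\{d\psi(E_\alpha)\}$ is no longer geodesic in $T$, so $B^\Phi(E_\alpha,E_\alpha)$ carries an extra tangential term $di\big(B^{\varphi_k}(E_\alpha,E_\alpha)\big)$ produced by the second fundamental forms of $\varphi_1,\varphi_2$. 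The key point is that these extra terms sum to $di(\tau(\varphi_1))+di(\tau(\varphi_2))=0$ by minimality; hence $\Delta\tau(\Phi)=\mu'\eta_1^T$, with $\mu'$ obtained from \eqref{definizionedimu} under $p\mapsto m_1$, $q\mapsto m_2$. This is exactly where minimality of the factors is indispensable, and I expect it to be the main technical obstacle.

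Next I would treat the curvature term. Being pointwise, the sectional-curvature identities \eqref{curvaturasezionaleformulaquasifinale}--\eqref{curvaturasezionaleformulaquasifinalebis} apply verbatim to the orthonormal vectors $d\psi(E_\alpha)$, using only that $B^Q(\eta_1^T,W)=0$ for every $W$ tangent to $T$ (which follows from \eqref{condiztangenzatoro} and \eqref{espressionefinaleBinQ}, and holds in particular for vectors tangent to $\Phi(M)\subset T$). Consequently $\langle\tau_2(\Phi),\eta_1^T\rangle$ is the right-hand side of \eqref{magica} with $p\mapsto m_1$, $q\mapsto m_2$. The algebraic miracle of Theorem \ref{teorema1} is that the brace in \eqref{magica} vanishes whenever \eqref{condizioneproperbiharmonic} holds, \emph{independently of the dimensions}: using \eqref{normacostantesutoro} and \eqref{condizioneimmersionetoroinellissoide} one checks that both summands reduce to $(c+d)^2\big[\,m_1/c^2+m_2/d^2\,\big]$. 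Since $i$ is proper biharmonic, \eqref{condizioneproperbiharmonic} is in force, so the same cancellation kills the normal component for $\Phi$. The tangential component of $\tau_2(\Phi)$ vanishes by the Gauss-equation argument \eqref{componentetangentenulla3}, again because $B^Q(\eta_1^T,\cdot)$ annihilates vectors tangent to $T$. This establishes that $\Phi$ is biharmonic.

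Finally, for \emph{properness} I would read off from the formula for $\lambda'$ that $\tau(\Phi)\neq 0$ exactly when $m_1\,d\neq m_2\,c$ (equivalently $m_1/c\neq m_2/d$, after substituting \eqref{condizioneproperbiharmonic}). This non-minimality is the point one must verify, and I regard it as the only delicate issue for a completely unconditional statement, since $m_1,m_2$ are a priori unrelated to the integers $p,q$ for which $i$ itself is proper. Granting it, $\Phi=i\circ(\varphi_1\times\varphi_2)$ is proper biharmonic, as claimed.
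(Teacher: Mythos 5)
Your computation is exactly the variant the authors have in mind: the paper's own proof of this theorem is the single sentence that it is ``a straightforward variant of the arguments of Theorem \ref{composition-property-theorem1},'' and the details you supply are the intended ones. The tension field via the composition law together with \eqref{secondaformaT}, the rough Laplacian with the extra tangential terms annihilated by $\tau(\varphi_1)=\tau(\varphi_2)=0$ (each weighted by the constants $c^2/a^2$ and $d^2/b^2$, so they still vanish after summing over each factor separately), the curvature terms handled pointwise as in \eqref{curvaturasezionaleformulaquasifinale}--\eqref{componentetangentenulla3}, and the observation that the brace in \eqref{magica} with $p,q$ replaced by $m_1,m_2$ vanishes under \eqref{condizioneproperbiharmonic} because, using \eqref{normacostantesutoro} and \eqref{condizioneimmersionetoroinellissoide}, both summands reduce to $(c+d)^2\,[\,m_1/c^2+m_2/d^2\,]$ --- all of this is correct, and it establishes biharmonicity of the composition for arbitrary $m_1,m_2$.

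Your closing caveat about properness, however, is not a defect of your write-up but a genuine gap in the statement itself, and you are right to isolate it. As you computed, under \eqref{condizioneproperbiharmonic} one has $\tau(i\circ(\varphi_1\times\varphi_2))=\lambda'\,\eta_1^T$ with $\lambda'$ proportional to $(c+d)\,(m_1/c-m_2/d)$, so the composition is minimal --- hence biharmonic but \emph{not} proper --- exactly when $m_1 d=m_2 c$; and this is not excluded by the hypotheses, since properness of $i$ only gives $pd\neq qc$, which is independent of $m_1,m_2$. Concretely: take $c=1$, $d=2$, $p=1$, $q=3$ (so $pd=2\neq 3=qc$ and $i$ is proper biharmonic by Theorem \ref{teorema1}), let $\varphi_1$ be the identity of $S^1(a)$ and $\varphi_2$ an equatorial $S^2(b)\subset S^3(b)$; then $m_1 d=m_2 c=2$ and the composition is minimal. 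The theorem therefore needs the additional hypothesis $m_1 d\neq m_2 c$ (equivalently $m_1/c\neq m_2/d$). This is consistent with the literature: for $c=d=1$ it is precisely the condition $m_1\neq m_2$ assumed in the corresponding composition theorem of \cite{CMO02}, to which the authors' own remark says Theorem \ref{composition-property-theorem2} reduces. With that hypothesis added, your proof is complete and follows the paper's (omitted) argument.
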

\begin{proof} The proof of this result is a straightforward variant of the arguments of Theorem \ref{composition-property-theorem1} and so the details are omitted.
\end{proof}

\begin{remark}
When $c=d=1$ the composition properties described in Theorem~\ref{composition-property-theorem1} and Theorem~\ref{composition-property-theorem2} reduce to those first proved in \cite{CMO02}. It is important
to note that all biharmonic sub\-mani\-folds into the ellipsoids constructed using the composition properties have parallel mean curvature vector field.
\end{remark}


%
%

\end{document}